\documentclass{amsart}
\usepackage{amssymb,latexsym}
\theoremstyle{plain}
\newtheorem{theorem}{Theorem}

\newtheorem{proposition}{Proposition}
\newtheorem{lemma}{Lemma}
\theoremstyle{definition}

\newtheorem{remark}{Remark}

\date{}

\begin{document}

\title[normal bundle]
{An interpolation problem for the normal bundle of curves of genus $g\ge 2$ and high degree in $\mathbb {P}^r$}
\author{E. Ballico}
\address{Dept. of Mathematics\\
 University of Trento\\
38123 Povo (TN), Italy}
\email{ballico@science.unitn.it}
\thanks{The author was partially supported by MIUR and GNSAGA of INdAM (Italy).}
\subjclass[2010]{14H50; 14H60}
\keywords{normal bundle; curve in projective spaces}

\begin{abstract}
Let $C\subset \mathbb {P}^n$ be a smooth curve and $N_C$ its normal bundle.
$N_C$ satisfies strong interpolation if for all integers $s>0$ and $\lambda _i\in \{0,1,\dots ,n-1\}$, $1\le i \le s$, there are distinct points $P_1,\dots ,P_s\in C$ and linear subspaces
$U_i\subseteq E|P_i$ such that $\dim (U_i)= \lambda _i$ for all $i$ and the evaluation map $H^0(E)\to \oplus _{i=1}^{s} U_i$ has maximal rank (A. Atanasios).
We prove that $C$ satisfies strong interpolation if either $C$ is a linearly normal elliptic curve
or $C$ is a general embedding of degree $d\ge (5n-8)g+2n^2-5n+4$ of a smooth curve $X$ of genus $g\ge 2$.
\end{abstract}

\maketitle

\section{Introduction}\label{S1}

Let $C\subset \mathbb {P}^n$, be a smooth and connected projective curve. Set $d:= \deg (C)$ and
$g:= p_a(C)$. Let $N_C$ denote the normal bundle of $C$ in $\mathbb {P}^n$. The sheaf $N_C$ is a rank
$r-1$ vector bundle on $C$ and $\deg (N_C) = (r+1)d +2g-2$. Riemann-Roch gives $\chi (N_C) = (r+1)d-(r-3)(g-1)$.
If (as always in this note) $h^1(\mathcal {O}_C(1))=0$, then $h^1(N_C)=0$ and hence $h^0(N_C) = (r+1)d-(r-3)(g-1)$.
In \cite{a} A. Atanasov  defined several interpolation problems for a vector bundle $E$ on a smooth curve $C$. We assume
$h^0(C,E)>0$ and set $r:= \mbox{rank}(E)$, $a:= \lfloor h^0(E)/r\rfloor$ and $b:= h^0(E)-ra$. $E$ is said to satisfy {\it strong interpolation}
if for all integers $s>0$ and $\lambda _i\in \{0,1,\dots ,r\}$, $1\le i \le s$, there are distinct points $P_1,\dots ,P_s\in C$ and linear subspaces
$U_i\subseteq E|P_i$ such that $\dim (U_i)= \lambda _i$ for all $i$ and the evaluation map $H^0(E)\to \oplus _{i=1}^{s} U_i$ has maximal rank, i.e., it is surjective if $\sum _i\lambda _i \le h^0(E)$
and it is injective if $\sum _i \lambda _i\ge h^0(E)$. If these conditions are satisfied only for $s=a$ and $\lambda _i = r$ for all $i$ (resp.
only for $s=a+1$ and $\lambda _i=r$ for $i\le a$, $\lambda _{a+1} =b$), then $E$ is said to satisfy {\it weak interpolation} (resp. {\it regular interpolation}).
Regular and strong interpolation are equivalent (\cite{a}, Theorem 8.1). In \cite{a} this notion was applied to the case of the normal bundle $N_C$
of a smooth curve $C\subset \mathbb {P}^n$. Many curves are proved to have normal bundle with strong interpolation or with weak interpolation (\cite{a}).
In this note we add other curves to the list.

\begin{theorem}\label{i1}
Let $X$ be a smooth curve of genus $g\ge 2$. Fix integers $n\ge 3$ and $d\ge (5n- 8)g + 2n^2 - 5n+4$. Let $C\subset \mathbb {P}^n$ be a general degree $d$
embedding of $X$. Then $N_C$ satisfies strong interpolation.
\end{theorem}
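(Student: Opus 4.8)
The plan is to reduce strong interpolation to two generic-vanishing statements for twists of $N_C$, and then to establish those by specializing the embedding to a nodal curve. Since $h^1(\mathcal O_C(1))=0$ we have $h^1(N_C)=0$ and $h^0(N_C)=\chi (N_C)=(n+1)d-(n-3)(g-1)$; write $\rho :=\operatorname{rank}(N_C)=n-1$, $a:=\lfloor h^0(N_C)/\rho \rfloor$ and $b:=h^0(N_C)-\rho a$. By Theorem 8.1 of \cite{a} it suffices to prove regular interpolation, i.e.\ to find distinct points $P_1,\dots ,P_{a+1}\in C$ and a subspace $U_{a+1}\subseteq N_C|_{P_{a+1}}$ with $\dim U_{a+1}=b$ for which evaluation $H^0(N_C)\to \bigoplus _{i=1}^{a}N_C|_{P_i}\oplus U_{a+1}$ is bijective. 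Fixing $D:=P_1+\cdots +P_a$, evaluation onto $\bigoplus _{i\le a}N_C|_{P_i}$ is surjective with kernel $H^0(N_C(-D))$ exactly when $h^1(N_C(-D))=0$, in which case $h^0(N_C(-D))=\chi (N_C(-D))=b$; taking $U_{a+1}$ to be the image of this kernel under evaluation at a general $P_{a+1}$ then makes the full map bijective precisely when that evaluation is injective, i.e.\ when $h^0(N_C(-D-P_{a+1}))=0$. Hence the theorem follows once I prove, for a general effective $D$ of degree $a$ and general $P_{a+1}$, that
\[ h^1(N_C(-D))=0 \qquad \text{and}\qquad h^0(N_C(-D-P_{a+1}))=0 . \]

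These are genuine theta-type conditions. Because $a\gg g$ in our degree range, a general effective divisor of degree $a$ (resp.\ $a+1$) is a general point of $\operatorname{Pic}^{-a}(C)$ (resp.\ $\operatorname{Pic}^{-(a+1)}(C)$), so the two statements say that $N_C\otimes \xi $ has no $H^1$ for general $\xi \in \operatorname{Pic}^{-a}$ and no $H^0$ for general $\xi \in \operatorname{Pic}^{-(a+1)}$. A Riemann--Roch computation shows that the relevant twists have slope just above, resp.\ just below, $g-1$, so these do not follow formally from the Euler and normal-bundle sequences---there the line-bundle terms have negative enough degree to carry cohomology. What is really needed is that $N_C$ is cohomologically general, i.e.\ admits a theta divisor in the two critical Picard components.

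To produce such genericity I would argue by specialization. By semicontinuity of cohomology and openness of the maximal-rank condition, it is enough to exhibit one curve of degree $d$ and arithmetic genus $g$ in the closure of the family of degree-$d$ embeddings of $X$ whose limiting normal bundle satisfies the two vanishings. I take the nodal curve $Z=Y\cup _pR$, where $Y$ is a general degree $(d-1)$ embedding of $X$, $R$ is a general line through a general point $p\in Y$ meeting $Y$ only at $p$, so that $\deg Z=d$, $p_a(Z)=g$, and the rational tail is contracted in the stable model, making $Z$ a limit of embeddings of $X$. The standard restriction sequence for a nodal union,
\[ 0\to N_Z|_R(-p)\to N_Z\to N_Z|_Y\to 0,\qquad N_Z|_Y\cong N_Y(p),\ \ N_Z|_R\cong N_R(p)\cong \mathcal O_{\mathbb P^1}(2)^{\rho}, \]
reduces, after twisting by $-D'$ and distributing $D'=D'_Y+D'_R$ with $\deg D'_R\le 2$ (resp.\ $\ge 2$ for the $H^0$ statement), the cohomology of $N_Z(-D')$ to that of $N_Y(p-D'_Y)$ together with an $H^1$-free (resp.\ $H^0$-free) computation on $R\cong \mathbb P^1$. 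Running this as an induction on $d$---equivalently, attaching $d-\deg Y_0$ general lines to a fixed minimal embedding $Y_0$---reduces the problem to a base case at the bottom of the range.

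The main obstacle is to control specialness uniformly through this process and to settle the base case, and it is precisely here that the hypothesis $d\ge (5n-8)g+2n^2-5n+4$ is used. In the inductive step I must check that the node twist $+p$ on $Y$ and the chosen distribution of $D'$ keep the slope of every twisted restriction inside the window in which $h^1$ (for the first statement) or $h^0$ (for the second) is forced to vanish; the stated bound is what makes these slope inequalities hold simultaneously at every stage. For the base case, where the inductive reduction is unavailable, I would degenerate once more---attaching a degree-$n$ rational normal curve, whose balanced normal bundle $\mathcal O_{\mathbb P^1}(n+2)^{\rho}$ renders all twists nonspecial, or specializing $N_{Y_0}$ to a direct sum of general line bundles---so that the theta condition becomes visible directly. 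Verifying that the numerical bound yields all of these slope estimates, and that the limiting normal bundle of the nodal curve is the flat limit governing the semicontinuity argument, is the technical heart; the two reduction steps are formal.
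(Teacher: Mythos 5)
Your reduction of strong interpolation to the two generic vanishings $h^1(N_C(-D))=0$ and $h^0(N_C(-D-P_{a+1}))=0$ is correct and is exactly the content of Lemma~\ref{a2} combined with \cite{a}, Theorem 8.1. But the second half of your argument is a program rather than a proof, and it contains a concrete error. For a nodal union $Z=Y\cup _pR$ the restriction $N_Z|_Y$ is \emph{not} $N_Y(p)$: it is the positive elementary modification of $N_Y$ at $p$ in the single direction determined by the tangent line of $R$, i.e.\ it sits in $0\to N_Y\to N_Z|_Y\to \mathbb {C}_p\to 0$ and has degree $\deg (N_Y)+1$, whereas $N_Y(p)$ has degree $\deg (N_Y)+(n-1)$. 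Likewise $N_Z|_R$ is a one-step modification of $N_R\cong \mathcal {O}_{\mathbb {P}^1}(1)^{n-1}$, hence isomorphic to $\mathcal {O}_{\mathbb {P}^1}(2)\oplus \mathcal {O}_{\mathbb {P}^1}(1)^{n-2}$, not to $\mathcal {O}_{\mathbb {P}^1}(2)^{n-1}$. Since the entire bookkeeping of your induction --- how many of the $a$ (resp.\ $a+1$) points each attached line can absorb --- depends on these degrees, the numerics are off by roughly a factor of $n-1$ per node, and it is precisely these numerics that were supposed to yield the bound $d\ge (5n-8)g+2n^2-5n+4$. Moreover you explicitly defer the inductive cohomology control, the base case, and the verification that the embedded nodal curve is a flat limit of embeddings of the \emph{fixed} curve $X$ (with the normal sheaves forming a flat family so that semicontinuity applies) to ``the technical heart''; since these are exactly the hard points of the Hartshorne--Hirschowitz-style method, nothing in the stated degree range is actually established.

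The paper's proof is entirely different and far shorter. By \cite{br}, for $d$ in the stated range a general stable bundle $E\in M(X;n-1,(n+1)d+2g-2)$ arises as the normal bundle of a general degree $d$ embedding of $X$; the numerical bound in Theorem~\ref{i1} is the hypothesis needed to quote \cite{br}, not the output of slope estimates along a degeneration. Strong interpolation for a general stable bundle is then Lemma~\ref{a4}: one specializes $E$ to a direct sum of general line bundles of nearly balanced degrees, where the two vanishings of Lemma~\ref{a2} reduce to $h^0(R)=\max \{0,x-g+1\}$ for a general $R\in \mbox{Pic}^x(X)$, and concludes by semicontinuity, using that on a curve of genus $g\ge 2$ every vector bundle is a flat limit of stable bundles. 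If you wanted to complete your route you would essentially be re-proving Atanasov's degeneration machinery; the efficient repair is to replace the nodal induction by the citation of \cite{br} together with the line-bundle specialization.
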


\begin{proposition}\label{a1}
Let $C\subset \mathbb {P}^n$, $n\ge 2$, be a linearly normal elliptic curve. Then $N_C$ satisfies strong interpolation.
\end{proposition}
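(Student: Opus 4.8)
The plan is to pin down a single structural property of $N_C$, namely its semistability, and to show that, once this is granted, strong interpolation follows from elementary vanishing on the elliptic curve. First I would invoke Theorem~8.1 of \cite{a} to replace strong interpolation by regular interpolation, so that only one configuration of points must be produced. Since $g=1$ and $C$ is linearly normal, $h^0(\mathcal{O}_C(1))=\deg\mathcal{O}_C(1)$, forcing $\deg C=n+1$; thus $C$ is an elliptic normal curve, $N_C$ has rank $n-1$ and degree $(n+1)^2$, and $h^1(N_C)=0$, $h^0(N_C)=(n+1)^2$. Writing $(n+1)^2=(n-1)a+b$ with $0\le b\le n-2$ fixes the integers $a,b$ of regular interpolation, and $\mu(N_C)=(n+1)^2/(n-1)=a+b/(n-1)$; here $b$ is small (e.g.\ $b=4$ once $n\ge 6$, and $b=0$ exactly for $n\in\{2,3,5\}$).

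Next I would unwind regular interpolation. At the first $a$ general points $P_1,\dots,P_a$ take the full fibres $U_i=N_C|_{P_i}$; the evaluation map $H^0(N_C)\to\bigoplus_{i=1}^{a}N_C|_{P_i}$ has kernel $H^0(N_C(-P_1-\cdots-P_a))$, so it is onto exactly when this kernel has dimension $b$, and the resulting $b$ sections separate a general further point $P_{a+1}$ --- allowing one to take for $U_{a+1}$ their image in $N_C|_{P_{a+1}}$ --- exactly when $H^0(N_C(-P_1-\cdots-P_{a+1}))=0$. Hence it suffices to prove, for general $P_i$,
\[
h^0\bigl(N_C(-P_1-\cdots-P_a)\bigr)=b \qquad\text{and}\qquad h^0\bigl(N_C(-P_1-\cdots-P_{a+1})\bigr)=0 .
\]

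Now suppose $N_C$ is semistable (of slope $\mu(N_C)>0$); every twist of $N_C$ is then again semistable. The bundle $N_C(-P_1-\cdots-P_{a+1})$ has negative degree $b-(n-1)$, hence negative slope and no sections, which gives the second vanishing for all $P_i$. For the first, Serre duality on $C$ (where $K_C=\mathcal{O}_C$) identifies $h^1(N_C(-P_1-\cdots-P_a))$ with $h^0\bigl(N_C^{\vee}(P_1+\cdots+P_a)\bigr)$, and $N_C^{\vee}(P_1+\cdots+P_a)$ is semistable of slope $-b/(n-1)\le 0$; when $b\ge 1$ this slope is negative and the group vanishes, so $h^0(N_C(-P_1-\cdots-P_a))=\chi=b$ for every choice, while when $b=0$ the slope is $0$ and one chooses the $P_i$ general, so that the corresponding degree-$0$ semistable twist avoids the theta divisor. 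This yields regular, hence strong, interpolation; the rank-one case $n=2$ is immediate, as $N_C=\mathcal{O}_C(3)$ is a nonspecial line bundle.

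The hard part is the semistability of $N_C$, and the purely formal positivity bounds fall just short of it. From the Euler sequence $N_C$ is a quotient of $\mathcal{O}_C(1)^{\oplus(n+1)}$, giving only $\mu_{\min}(N_C)\ge n+1$; dualizing the conormal sequence exhibits $N_C^{\vee}\otimes\mathcal{O}_C(1)$ as the kernel of $M_L\to\mathcal{O}_C(1)$, where $M_L=\ker\bigl(H^0(\mathcal{O}_C(1))\otimes\mathcal{O}_C\to\mathcal{O}_C(1)\bigr)$ is the kernel bundle, which is stable because $\deg\mathcal{O}_C(1)=n+1\ge 2g+1$ and $\gcd(n,n+1)=1$ (Butler, Ein--Lazarsfeld); this sharpens the estimate to $\mu_{\min}(N_C)>(n+1)^2/n$. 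Since $(n+1)^2/n=n+2+1/n$ still lies below the required threshold $a=n+3$, whereas semistability would give $\mu_{\min}(N_C)=(n+1)^2/(n-1)=n+3+4/(n-1)$, closing this last gap is the crux. I would attack it by combining two facts: from $H^0(M_L)=0$ one gets $\operatorname{Hom}(N_C,\mathcal{O}_C(1))=0$ and, twisting by $\operatorname{Pic}^0$, $\operatorname{Hom}(N_C,A)=0$ for every $A\in\operatorname{Pic}^{\,n+1}$, which forbids any quotient of slope exactly $n+1$; and from the Heisenberg symmetry --- the $(n+1)$-torsion $E[n+1]$ fixes $\mathcal{O}_C(1)$, hence acts on $C$ by projective automorphisms, so the canonical maximal destabilizing subsheaf $F\subseteq N_C$ is $E[n+1]$-invariant and $(n+1)\mid\deg F$ --- which, together with $\mu_{\min}(N_C)\ge n+1$, confines a destabilizing $F$ to the borderline $\deg F=(n+1)(\operatorname{rk}F+2)$ already excluded for $\operatorname{rk}F\ge(n-1)/2$. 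Ruling out the remaining low-rank, high-slope subsheaves --- equivalently, establishing the full (poly)stability of the normal bundle of an elliptic normal curve --- is where the real work lies; for $n=3$ there is nothing to prove, since $C$ is the base locus of a pencil of quadrics and $N_C\cong\mathcal{O}_C(2)^{\oplus 2}$ is manifestly polystable.
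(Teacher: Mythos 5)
Your reduction is sound and closely parallels the paper's: the equivalence of regular and strong interpolation (\cite{a}, Theorem 8.1) plus the unwinding into the two conditions $h^0(N_C(-P_1-\cdots-P_a))=b$ and $h^0(N_C(-P_1-\cdots-P_{a+1}))=0$ is exactly the paper's Lemma~\ref{a2}, and your slope computations ($a=n+3$, $b=4$ for $n\ge 6$; $b=0$ exactly for $n\in\{2,3,5\}$; $b=1$ for $n=4$) are correct. Given semistability, your vanishing arguments work: negative slope kills $h^0(N_C(-S'))$ for every choice of points, Serre duality with $K_C=\mathcal{O}_C$ kills $h^1(N_C(-S))$ when $b\ge 1$, and in the $b=0$ cases genericity of $S$ avoids the finitely many degree-$a$ line bundles mapping nontrivially into $N_C$. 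This is, modulo packaging, the same vanishing mechanism the paper runs on each stable summand $E_i$ of $N_C$ (there, $E_i(-S)$ stable of positive slope and rank $\ge 2$ gives $h^1=0$, and $E_i(-S')$ stable of negative slope gives $h^0=0$), and your separate treatments of $n=2$ ($N_C=\mathcal{O}_C(3)$) and $n=3$ ($N_C\cong\mathcal{O}_C(2)^{\oplus 2}$) match the paper's.

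The genuine gap is the one you yourself flag: the semistability (in fact polystability) of $N_C$ is never established, and everything hinges on it. The paper does not prove this either --- it imports it wholesale from Ein--Lazarsfeld (\cite{el}, Theorem 4.1, case $i=1$), which states that the normal bundle of an elliptic normal curve is polystable of slope $(n+1)^2/(n-1)$. Your attempted substitutes fall short exactly where you say they do: the Euler-sequence bound $\mu_{\min}(N_C)\ge n+1$ and the sharpening via stability of the kernel bundle $M_L$ give $\mu_{\min}(N_C)>(n+1)^2/n$, which misses the needed $(n+1)^2/(n-1)$; and the Heisenberg-invariance argument (the canonical maximal destabilizing subsheaf $F$ is invariant under translation by $E[n+1]$, forcing $(n+1)\mid\deg F$ via the theorem of the square applied to $\det F$) only excludes destabilizing subsheaves in a range of ranks, leaving the low-rank, high-slope subsheaves open. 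Ruling those out is precisely the content of the Ein--Lazarsfeld theorem, so as written your proposal is a correct conditional argument, not a proof; to complete it you should either cite \cite{el} at that point (after which your semistable version of the vanishing is a clean, slightly more economical alternative to the paper's summand-by-summand analysis) or supply the missing stability proof, which is substantial.
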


In the case $r=3$ the question of the existence of $C\subset \mathbb {P}^r$ with $N_C$ satisfying interpolation seems to to be quite different. A stronger
condition is the condition $h^1(N_C(-2)) =0$ (if $h^1(N_C)=0$ it corresponds to require that the restriction map $H^0(E)\to E|S$ is bijective
for a general $S\in |\mathcal {O}_C(2)|$). Let $\Gamma \subset \mathbb {N}^2$ be the set of all pairs $(d,g)\in \mathbb {N}^2$ such
that there is a smooth, connected and non-degenerate curve $C\subset \mathbb {P}^3$ with $\deg (C)=d$, $p_a(C) =g$ and $h^i(N_C(-2)) =0$, $i=0,1$. Several results on the 
set $\Gamma$ are known (\cite{bbem}, \cite{eh}, \cite{f}, \cite{w}, \cite{w1}). Here we just point out one of these results (\cite{p}, Corollaire 5.18). Fix any integer $g\ge 2$. Let $D(g)$ be the minimal integer $x$ such that
for all $d\ge x$ there is a smooth and connected projective curve $C\subset \mathbb {P}^3$ such
that $\deg (C)=d$, $p_a(C)=g$ and $h^i(N_C(-2)) =0$, $i=0,1$. It is well-known that if $d\gg 0$, then there is a smooth and connected curve $C \subset \mathbb {P}^3$
with $\deg (C)=d$, $p_a(C) =0$ and $h^i(N_C(-2)) =0$, $i=0,1$. Hence $D(g)$ is a well-defined integer. We have $\limsup D(g)/g^{2/3} = (9/8)^{1/3}$ (\cite{p},
Corollaire 5.18).
D. Perrin also studied the $h^0$-stability of the normal bundle of space curves (\cite{p}, \S 3). 

\section{The proofs}

\begin{lemma}\label{a2}
Let $E$ be a rank $r$ vector bundle on $C$. Set $a:= \lfloor h^0(E)/r\rfloor$ and $b:= h^0(E) -ar$. Fix general subsets $S, S'$ of $C$ such that
$\sharp (S) =a$ and $\sharp (S' ) = a+1$. The vector bundle $E$ satisfies
strong interpolation if and only if $h^0(E(-S)) =b$ and $h^0(E(-S')) =0$.
\end{lemma}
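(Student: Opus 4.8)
The plan is to reduce everything to regular interpolation via the equivalence in \cite{a}, Theorem 8.1, and then to translate what regular interpolation demands into the vanishing statements about twists of $E$. Recall that regular interpolation requires maximal rank of the evaluation map in exactly two situations: case (i), $s=a$ with $\lambda_i=r$ for all $i$; and case (ii), $s=a+1$ with $\lambda_i=r$ for $i\le a$ and $\lambda_{a+1}=b$. In case (i) the target $\bigoplus_{i=1}^{a}E|P_i$ has dimension $ar\le h^0(E)$, so maximal rank means surjectivity of the full evaluation; in case (ii) the target has dimension $ar+b=h^0(E)$, so maximal rank means bijectivity. I will use repeatedly that for a finite set $T\subset C$ the full evaluation $H^0(E)\to\bigoplus_{P\in T}E|P$ has kernel $H^0(E(-T))$ (writing $T$ also for the reduced divisor $\sum_{P\in T}P$), together with upper semicontinuity of $h^0(E(-T))$: since $h^0(E(-T))$ attains its minimal value on a dense open subset of the parameter space, I may pass freely between ``there exist points'' and ``for general points''.

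First I would dispose of case (i). Surjectivity of $H^0(E)\to\bigoplus_{i=1}^{a}E|P_i$ for $S=\{P_1,\dots,P_a\}$ is equivalent to $h^0(E)-h^0(E(-S))=ar$, i.e.\ to $h^0(E(-S))=b$. As $h^0(E(-S))\ge b$ always and equality is the generic value, case (i) holds for some, equivalently for general, $S$ with $\sharp(S)=a$ if and only if $h^0(E(-S))=b$ for general such $S$. This yields one of the two cohomological conditions directly.

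Next I would treat case (ii), which is the heart of the matter. Fix a general $S'=\{P_1,\dots,P_{a+1}\}$ and set $V:=H^0(E(-P_1-\cdots-P_a))$; since the first $a$ points of a general $S'$ form a general set of size $a$, case (i) gives $\dim V=b$. If moreover $h^0(E(-S'))=0$, then $\mathrm{ev}_{P_{a+1}}\colon V\to E|P_{a+1}$ is injective with a $b$-dimensional image $W$. Now a choice of $b$-dimensional $U_{a+1}\subseteq E|P_{a+1}$ together with the projection onto it kills a complementary $(r-b)$-dimensional subspace $K_{a+1}$, and the kernel of the full evaluation map of case (ii) is exactly $\{\sigma\in V:\mathrm{ev}_{P_{a+1}}(\sigma)\in K_{a+1}\}\cong W\cap K_{a+1}$. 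Since $\dim W+\dim K_{a+1}=r$, a generic $K_{a+1}$ meets $W$ trivially, so the evaluation is injective, hence bijective, and case (ii) holds. Conversely, bijectivity in case (ii) forces the full evaluation at all $a+1$ points to be injective, whence $h^0(E(-S'))=0$ for that $S'$ and then for general $S'$ by semicontinuity. Combining the two cases, strong interpolation holds if and only if both case (i) and case (ii) hold, if and only if $h^0(E(-S))=b$ and $h^0(E(-S'))=0$ for general $S,S'$ of the prescribed cardinalities.

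The step I expect to be the main obstacle is case (ii): because the evaluation there lands in a proper subspace $U_{a+1}$ rather than the full fiber, bijectivity is not literally the condition $h^0(E(-S'))=0$. The bridge is the transversality argument above, choosing the complement $K_{a+1}$ to meet the $b$-dimensional image $W$ trivially; this choice is possible precisely because $\dim W=b$, which itself rests on case (i). The remaining work is the routine semicontinuity bookkeeping used to interchange the existence of suitable points with the behaviour at general points.
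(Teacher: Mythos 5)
Your proposal is correct and follows essentially the same route as the paper: both reduce to regular interpolation via \cite{a}, Theorem 8.1, identify the kernel $V$ of evaluation at the first $a$ points of a general $S'$ as a $b$-dimensional space mapping injectively into $E|P_{a+1}$ (using $h^0(E(-S'))=0$), and then choose $U_{a+1}$ so that the resulting map is bijective. The only cosmetic difference is that the paper takes $U_{a+1}$ to be the image $v(V)$ itself, whereas you take a generic $U_{a+1}$ and argue that a generic complement $K_{a+1}$ meets the image $W$ trivially; your explicit semicontinuity bookkeeping in the ``only if'' direction just spells out what the paper dismisses as obvious.
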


\begin{proof}
The ``~only if~'' part is obvious. Assume $h^0(E(-S)) =b$ and $h^0(E(-S')) =0$ for general $S, S'$. Fix $P\in S'$ and set $A:= S'\setminus \{P\}$. Since $S'$ is
general, $A$ is general and hence $h^0(E(-A)) =b$. The set $A$ shows that $E$ satisfies weak interpolation and that the restriction map $u: H^0(E) \to \oplus _{Q\in A} E|P$
is surjective. Hence the kernel $V$ of $u$ has dimension $b$. 
Since $h^0(E(-S')) =0$, the restriction map $v: H^0(E) \to \oplus _{Q\in A} E|Q \oplus E|P$ is injective. Hence $v(V)$ is a $b$-dimensional
linear subspace of $E|P$ and the restriction map $H^0(E) \to \oplus _{Q\in A} E|Q \oplus v(V)$ is bijective. Hence $E$ satisfies regular interpolation. Hence $E$ satisfies strong interpolation (\cite{a}, Theorem 8.1).
\end{proof}

For all integers $r >0$ and $t$ and any smooth curve $X$ of genus $g\ge 2$ let $M(X;r,t)$ denote the moduli space of all stable vector bundles on $X$ with degree $t$ and rank $r$.
The scheme $M(X;r,t)$ is non-empty and irreducible.

\begin{lemma}\label{a4}
Fix a general $E\in M(X;r,t)$. Then either $h^0(E)=0$ or $h^1(E)=0$. In both cases $E$ satisfies strong interpolation.
\end{lemma}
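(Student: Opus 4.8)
The plan is to decouple the cohomological assertion from the interpolation assertion and to reduce both to a single Brill--Noether vanishing for a general stable bundle. Since $\chi(E)=t-r(g-1)$ is constant on $M(X;r,t)$, and since $h^0(E)\ge\max(0,\chi(E))$, $h^1(E)\ge\max(0,-\chi(E))$ always hold while $h^0$ and $h^1$ are upper semicontinuous on the irreducible variety $M(X;r,t)$, the cohomological claim ``$h^0(E)=0$ or $h^1(E)=0$ for general $E$'' is equivalent to the existence of a single stable $E_0$ with $h^0(E_0)\,h^1(E_0)=0$, i.e. with natural cohomology ($h^0=\max(0,\chi)$, $h^1=\max(0,-\chi)$). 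Using Serre duality $h^1(E)=h^0(\omega_X\otimes E^{\vee})$ together with the isomorphism $E\mapsto\omega_X\otimes E^{\vee}$ from $M(X;r,t)$ onto $M(X;r,r(2g-2)-t)$, which reverses the sign of $\chi$, I may assume $\chi(E)\le 0$ and reduce to producing one stable $E_0$ of rank $r$ and degree $t\le r(g-1)$ with $h^0(E_0)=0$.

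To build such an $E_0$ I would argue by an elementary extension construction. If $t<0$ every semistable bundle already has $h^0=0$ by the slope inequality, so assume $0\le t\le r(g-1)$. Choose balanced integers $d_1,\dots,d_r$ with $\sum_i d_i=t$ and $d_i\in\{\lfloor t/r\rfloor,\lceil t/r\rceil\}$; since $t/r\le g-1$ and $g-1\in\mathbb{Z}$ we get $d_i\le g-1$, so a general $L_i\in\mathrm{Pic}^{d_i}(X)$ satisfies $h^0(L_i)=0$ (the image of $\mathrm{Sym}^{d_i}X$ in $\mathrm{Pic}^{d_i}(X)$ has dimension $d_i<g$). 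Taking $E_0$ to be a general iterated extension with successive quotients $L_1,\dots,L_r$, the induced filtration gives $h^0(E_0)\le\sum_i h^0(L_i)=0$. The main obstacle is exactly the stability of $E_0$: I expect to invoke the standard genericity theorems on extensions (in the spirit of Lange and of Narasimhan--Ramanan), which for $g\ge 2$ guarantee that a general extension with balanced graded degrees is stable, so that $E_0\in M(X;r,t)$. This completes the cohomological statement.

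For the interpolation statement I would use Lemma~\ref{a2}. Write $a=\lfloor h^0(E)/r\rfloor$ and $b=h^0(E)-ar$, and fix general $S,S'$ with $\sharp S=a$, $\sharp S'=a+1$. If $\chi(E)\le 0$, then for general $E$ we have $h^0(E)=0$, hence $a=b=0$, $S=\emptyset$, and both $h^0(E(-S))=h^0(E)=0=b$ and $h^0(E(-S'))\le h^0(E)=0$ hold trivially, so Lemma~\ref{a2} yields strong interpolation. If $\chi(E)\ge 0$, then for general $E$ we have $h^1(E)=0$ and $h^0(E)=\chi(E)$. Here, for fixed $S$, tensoring by $\mathcal{O}_X(-S)$ is an isomorphism $M(X;r,t)\to M(X;r,t-ra)$; since $\chi(E(-S))=\chi(E)-ra=b\ge 0$, the cohomological claim already proved shows that $\{E:h^0(E(-S))=b\}=\{E:h^1(E(-S))=0\}$ is dense open, and similarly $\chi(E(-S'))=b-r<0$ makes $\{E:h^0(E(-S'))=0\}$ dense open. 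These conditions cut out a dense open locus in $M(X;r,t)\times\mathrm{Sym}^{a}X\times\mathrm{Sym}^{a+1}X$, so they hold for general $E$ and general $S,S'$; by Lemma~\ref{a2} the general $E$ satisfies strong interpolation.

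In summary, everything rests on the single input that a general stable bundle has natural cohomology, and the one step I expect to require genuine care is the stability of the balanced extension $E_0$ used to establish that input; the passage to $E(-S)$ and $E(-S')$ is then only a reindexing via the Picard translation together with Lemma~\ref{a2}.
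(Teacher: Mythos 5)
Your overall architecture---reducing strong interpolation via Lemma~\ref{a2} to natural cohomology of the twists $E(-S)$ and $E(-S')$, with the translation $E\mapsto E(-S)$ identifying $M(X;r,t)$ with $M(X;r,t-ra)$---is sound and matches the paper's strategy; the paper computes the twists by the same specialization it uses for $E$ itself, so your translation step is a legitimate, if anything tidier, variant. The gap is exactly at your one acknowledged point of ``genuine care,'' and it is not a citable technicality: when $r\mid t$, the balanced iterated extension you propose is \emph{never} stable, for any choice of the $L_i$ and of the extension classes. The first step of the filtration is the sub-line-bundle $L_1\subset E_0$ with $\deg(L_1)=t/r=\mu(E_0)$, which already violates the strict slope inequality; no genericity theorem of Lange or Narasimhan--Ramanan type can produce stability from a filtration whose bottom piece has slope equal to the total slope. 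For $t<r(g-1)$ you could repair this by unbalancing (say $d_1=t/r-1$, $d_r=t/r+1\le g-1$), but in the boundary case $t=r(g-1)$, i.e.\ $\chi(E)=0$, the requirement $h^0(L_i)=0$ forces $d_i\le g-1$ for all $i$, and $\sum_i d_i=r(g-1)$ then forces $d_i=g-1$ for every $i$: your construction is trapped in the perfectly balanced, never-stable configuration. This case cannot be discarded: it is its own fixed point under your Serre-duality reduction, and it reappears in your interpolation step whenever $r\mid h^0(E)$, where you need $h^0(E(-S))=b=0$ with $\chi(E(-S))=0$.

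The missing idea---and the paper's actual mechanism---is that the auxiliary bundle need not be stable at all. Since $g\ge 2$, \emph{every} vector bundle on $X$ is a flat limit of stable bundles; so the paper takes the split bundle $F=L_1\oplus\cdots\oplus L_r$ with general $L_i$ (all of degree $g-1$ in the $\chi=0$ case), computes $h^0(F)=0$ from the Brill--Noether fact $h^0(R)=\max\{0,x-g+1\}$ for general $R\in\mathrm{Pic}^x(X)$, and transfers the bound to the general stable $E$ by semicontinuity along a family of stable bundles degenerating to $F$, using the irreducibility of $M(X;r,t)$; the same degeneration, with twisted degrees, handles $F(-S)$ and $F(-S')$ directly. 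If you replace your extension construction by this degeneration (or import the properness of the generalized theta divisor on $M(X;r,r(g-1))$ to settle the $\chi=0$ case), the rest of your argument---the Serre-duality reduction, the trivial $\chi\le 0$ interpolation case, and the open-dense locus argument on $M(X;r,t)\times\mathrm{Sym}^a X\times\mathrm{Sym}^{a+1}X$---goes through.
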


\begin{proof}
Write $t = r(g-1)+e$. By Riemann-Roch to check that $h^0(E)\cdot h^1(E) =0$ we need to prove that $h^0(E) =\max \{0,e\}$. Riemann-Roch gives
$h^0(E) \ge \max \{0,e\}$. Fix
general lines bundles $L_1,\dots ,L_r$ on $X$ with $\deg (L_i)=g-1$ if $i<r$ and $\deg (L_r) =g-1+e$. We have $h^0(L_i) =0$ if $i\ne r$
and $h^0(L_r) =\max \{0,e\}$. 
Since $g\ge 2$, every vector bundle on $X$ is a flat limit of a family of stable vector bundles on $X$. The semicontinuity theorem for
cohomology gives $h^0(E)\le h^0(L_1\oplus \cdots \oplus L_r)=\max \{0,e\}$ and hence $h^0(E) =\max \{0,e\}$. To prove that $E$ satisfies strong interpolation we may assume
$e>0$. Set $f:= \lfloor e/r\rfloor$
and $h:= e-rf$. Fix general subset $S, S'$ of $X$ with $\sharp (S) =f$ and $\sharp (S')=f+1$. Let $R_i$, $1\le i \le h$, be a general line bundle on $X$ of degree $g-1+f+1$ and let $R_j$, $h+1 \le j \le r$, be
a general line bundle on $X$ with degree $g-1+f$. Set $F:= R_1\oplus \cdots \oplus R_r$. We have $h^0(F)=e$. Since $F$ is a flat limit of a family of stable vector bundles on $X$,
it is sufficient to prove that $h^0(F(-S))= h$ and $h^0(F(-S')) =0$ (Lemma \ref{a2}). This is true, because $h^0(R)=\max \{0,x-g+1\}$ for a general
$R\in \mbox{Pic}^x(X)$.
\end{proof}

\begin{proof}[Proof of Theorem \ref{i1}:]
For each integer $d \ge \max \{2g+1,g+n\}$ let $A(n,d)$ be the set of all degree $d$ embeddings of $X$ into $\mathbb {P}^n$. This set is irreducible
and non-empty. For each $f\in A(n,d)$ we get a vector bundle $f^\ast (N_{f(X)})$ on $X$. For a general $f$ and $d$ sufficiently large it is easy to
check that $f^\ast (N_{f(X)})$ is stable. Since $d$ is huge we even know that a general $E\in M(X;n-1,(n+1)d+2g-2)$ arises in this way (\cite{br}).
Lemma \ref{a4} gives that it satisfies strong interpolation.
\end{proof}

\begin{proof}[Proof of Proposition \ref{a1}:]
The case $n=2$ is trivial, because $N_C$ is a line bundle in this case. Assume $n\ge 3$. The case $i=1$ of \cite{el}, Theorem 4.1, gives
that $N_C$ is poly-stable, i.e. it is a direct sum of stable vector bundles, all with the same slope $(n+1)^2/(n-1)$. We have $h^0(N_C) = (n+1)^2$. 
Write $N_C \cong E_1\oplus \cdots \oplus E_s$ with each $E_i$ a stable vector bundle. Assume for the moment
$n=3$. In this case $C$ is a complete intersection of two quadric surfaces and $N_C\cong \mathcal {O}_C(2)\oplus \mathcal {O}_C(2)$.
We have $h^0(C,N_C(-S)) =0$ for any subset $S\subset C$ with $\sharp (S)=8$ and $S$ not the complete intersection of $C$ with a quadric surface.
Now assume that either $n = 4$ or $n\ge 6$. In this case each $E_i$ has rank at least two, because $(n+1)^2/(n-1) = n+3 +4/(n-1) \notin \mathbb {Z}$. Set $a:= \lfloor (n+1)^2/(n-1)$. Take any subset $S$, $S'$ of $C$ such that $\sharp (S) =a$ and $\sharp (S') =a+1$. Since $E_i$ is stable, $E_i(-S)$ and $E_i(-S')$ are stable.
Since $\deg (E_i(-S)) >0$ and $E_i(-S)$ is stable and not a line bundle, duality gives $h^1(E_i(-S)) =0$. Hence $h^1(N_C(-S)) =0$, i.e. $h^0(N_C(-S)) = (n+1)^2 -(n-1)a$.
Hence $N_C$ satisfies weak interpolation. Since $\deg (E_i(-S')) <0$ and $E_i(-S')$ is stable, we have $h^0(E_i(-S')) =0$. Lemma \ref{a2} gives that $N_C$
satisfies strong interpolation. Now assume $n=5$. In this case each $E_i$ is a line bundle and strong and weak interpolation are equivalent.
Let $B\subset C$ be any subset of $C$ with $\sharp (B) =9$. We have $h^0(N_C(-S)) =0$ if and only if $B$ is not a divisor of one
of the linear systems $|E_i|$, $1\le i \le 5$. 
\end{proof}

\begin{remark}\label{a3} The proof of Proposition \ref{a1} shows that if either $n=4$ or $n\ge 6$ for any zero-dimensional scheme $Z\subset C$
 the restriction map $H^0(E) \to E|Z$ has maximal rank.
\end{remark}

\providecommand{\bysame}{\leavevmode\hbox to3em{\hrulefill}\thinspace}

\end{document}